\newcommand{\cb}{\overline{C}} 
\newcommand{\ccc}{\mathscr{C}}
\newcommand{\cliff}{{\rm Cliff}} 
\newcommand{\cp}{\ccc_{P}} 
\newcommand{\fff}{\mathscr{F}} 
\newcommand{\gon}{{\rm gon}} 
\newcommand{\mmp}{\mathfrak{m}_{P}} 
\newcommand{\ob}{\overline{\mathcal{O}}} 
\newcommand{\obp}{\overline{\mathcal{O}}_P}
\newcommand{\oo}{\mathcal{O}} 
\newcommand{\op}{\mathcal{O}_P} 
\newcommand{\pb}{\overline{P}} 
\newcommand{\sys}{\mathscr{L}}
\newcommand{\vl}{\mathscr{W}}
\newcommand{\vlp}{\mathscr{W}_{P}}
\newcommand{\ww}{\omega} 
\newtheorem{thm}{Theorem}[section]
\newtheorem{prop}{Proposition}[section]
\newtheorem{lem}[thm]{Lemma}
\newtheorem{exam}[thm]{Example}
\newtheorem{defi}[thm]{Definition}
\begin{document}

\title{Max Noether's theorem for integral curves}

\thanks{Part of this work corresponds to the first named author's Ph. D.
thesis \cite{A}. The third named author thanks Steven L. Kleiman for a stay
at MIT a couple of years ago, when the problem of this work arised, and for all learnt about it from him.
The first named author was supported by CNPq grant number 140315/2010-1.
The second named author is partially supported by CNPq grant number 486468/2013-5  and by FAPEAL.
The third named author is partially supported by CNPq grant number 307978/2012-5}



\author[Lia F. F. Abrantes]{Lia Feital Fusaro Abrantes}
\address{ Departamento de Matem\'atica, CCE, UFV
Av. P H Rolfs s/n, 36579-900, Vi\c{c}osa MG Brazil}
\email{liafeital@ufv.br}

\author[A. Contiero]{Andr\'e Contiero}
\address{Instituto de Matem\'atica, UFAL. Av. Lourval de Melo Mota, s/n, 57072-900 Macei\'o, Brazil}
\email{andrecontiero@mat.ufal.br}


 \author[R. V. Martins]{Renato Vidal Martins}
\address{Departamento de Matem\'atica, ICEx, UFMG
Av. Ant\^onio Carlos 6627,
30123-970 Belo Horizonte MG, Brazil.}
\email{renato@mat.ufmg.br}

\keywords{singular curve, Max Noether's theorem, Clifford index, Koszul cohomology, Green's conjecture}

\subjclass[2010]{14H20 \and 14H45 \and 14H51}



\begin{abstract}
We generalize, for integral curves, a celebrated result of Max Noe\-ther on global sections of the
$n$-dualizing sheaf of a smooth nonhyperelliptic curve. This is our main result.
We also obtain an embedding of a non-Gorenstein curve in a way that we can express the dimensions
of the components of the ideal in terms of the main invariants of the curve. Afterwards we focus on
gonality, Clifford index and Koszul cohomology of non-Gorenstein curves by allowing torsion free
sheaves of rank 1 in their definitions. We find an upper bound for the gonality, which agrees with
Brill-Noether's one for a rational and unibranch curve. We characterize curves of genus 5 with
Clifford index 1, and, finally, we study Green's Conjecture for a certain class of curves, called
nearly Gorenstein.
\end{abstract}

\maketitle

\section{Introduction}\label{Intro}

In 1880, Max Noether established in \cite{N} a remarkable result which, in modern language, can be stated as follows.
\medskip

\noindent \textit{ {\rm {\bf Max Noether's Theorem}} \
If $C$ is a smooth, nonhyperelliptic curve which is complete over an algebraic closed field, and $\ww$ its dualizing sheaf, then the maps
\begin{equation}\label{TMN}
{\rm Sym}^n  H^0(C,\ww)\longrightarrow H^0(C,\ww^n)
\end{equation}are surjective for all $n\geq 1$.
}

\

One of its first applications, just taking $n=2$, is that a smooth canonical curve is set-theoretically intersection of $(g-2)(g-3)/2$
linearly independent quadrics, where $g$ is the genus of $C$.
In the late 1910's Enriques \cite{En}, proved that a canonical nonhypereliptic curve is the set-theorically
intersection of quadrics, unless the smooth curve is trigonal or isomorphic to a plane
quintic (a result also proved by Babbage in \cite{B}). A complete description of the canonical ideal
in terms of equations, based on Noether's dimension counts and following Enriques' division into cases,
was done by Petri \cite{P} in the early 1920's, as presented in \cite[p. 131]{ACGH}. Later on, new
approaches to Petri's analysis were carried out by Arbarello--Sernesi \cite{AS}, Mumford
\cite{M}, Saint--Donat \cite{S}, Shokurov \cite{Sh} and Stoehr--Viana \cite{SV}.

A proof of Max Noether's Theorem can also be found in \cite[p. 117]{ACGH},
where one can note that it is a consequence of projective normality of extremal (Castelnuovo) curves.
Indeed, extremal curves are projectively normal, which is a general fact proved in
\cite[pp.\,113-117]{ACGH} for smooth curves. But since Riemann--Roch
and Clifford's Theorems have versions for singular curves (\cite[App.]{EKS}, \cite[pp. 186-191]{R},
\cite[p. 108]{S}), the same proof holds for all integral curves as well. So if we assume that $C$ is Gorenstein,
i.e.,  its dualizing sheaf $\ww$ is invertible, then $\ww$ defines a morphism $\kappa :C\rightarrow{\mathbb{P}}^{g-1}$. Let $C':=\kappa (C)$
be the canonical model of $C$. Based on his Ph. D. thesis under Zariski, Rosenlicht proved in \cite{R} that $C'$ is extremal and that $\kappa$
is an isomorphism if $C$ is nonhyperelliptic. Therefore Max Noether's Theorem holds actually for all
Gorenstein nonhyperelliptic curves. An application of Max Noether's Theorem for Gorenstein curves, and also
Petri's Analysis, was pointed out by Mumford in \cite{M} to construct certain moduli spaces of curves with prescribed
Weierstrass semigroup, which was done by Stoehr in \cite{St1}, reproved and explored by him and the second
named author in \cite{CS}.

Although Max Noether's statement is purely intrinsic,  its proof for smooth, and more generally Gorenstein, curves is not.
As said above, the result is a straightforward consequence of the fact that canonical curves are extremal, a
strongly extrinsic argument. On the other hand, if the concern is proving for general integral curves,
it is likely difficult to avoid some hard local algebra. A step forward was done by Rosenlicht in his main theorem
\cite[Cor.\ and Thm.\,17, p.\,189]{R} in the late 1950's. In order to state it, one needs to extend a notion just
introduced above, i.e., if $\cb$ is the normalization of $C$ and $\overline{\kappa}:\cb\to\mathbb{P}^{g-1}$ the
morphism induced by the dualizing sheaf of $C$, one calls $C':=\overline{\kappa}(\cb)$ the
\emph{canonical model} of $C$.

\medskip

\noindent \textit{ {\bf Rosenlicht's Theorem}\ Let $C$ be an integral, nonhyperelliptic curve, which is complete
over an algebraic closed field. Then there exists a birrational morphism
\begin{equation}\label{ROT}
C'\longrightarrow C
\end{equation}
which is an isomorphism if and only if $C$ is Gorenstein.}

\

For the sake of simplicity, we will refer to the surjectivity of the morphisms in (\ref{TMN})
as ``Max Noether's statement", and to the existence of a birational morphism like (\ref{ROT})
as ``Rosenlicht's statement", no matter the hypotheses on the curve are.

According to \cite[Int.]{KM}, the Gorenstein part of the result, i.e., the isomorphism $C\cong C'$, was,
later on, successively reproved by several authors in many different ways: Deligne--Mumford \cite{DM} in 1969,
Mumford--Saint-Donat \cite{MSD} in 1973, Sakai \cite{Sak} in 1977, Catanese \cite[p.\,51]{C} in 1982, Fujita
\cite[p.\,39 Thm.\,(A1)]{F} in 1983, and Hartshorne \cite[Thm.\,1.6, p.\,379]{H} in 1986. More recently, Rosenlicht's Theorem
was also reproved within a modern language and refined version by Kleiman, with the third
named author, in \cite{KM}: if $\widehat{C}$ is the blowup of $C$ along its dualizing sheaf, then $\widehat{C}\cong C'$.

A connection between Max Noether's and Rosenlicht's statements in the general integral case appears in
\cite[Rem. 2.8]{Mt}, where it is proved that the former implies the latter. Therefore, since Noether's statement
is stronger, the technique of computing values of differentials, which is the core of Rosenlicht's proof,
becomes even harder if applied to prove Max Noether's Theorem. An attempt of doing
so was made in \cite[Thm. 3.7]{M}, but it assumes all non-Gorenstein points are unibranch, which simplifies
the combinatorial part of the proof. Removing this hypothesis is exactly what we do here in our main result:

\medskip

\noindent\textit{ {\rm {\bf Theorem 1}}\ Let $C$ be an integral, nonhyperelliptic curve, which is complete
over an algebraically closed field, and $\ww$ its dualizing sheaf.
Then the homomorphisms
$$
{\rm{Sym}}^n  H^0(C,\ww )\longrightarrow H^0(C,\ww ^n)
$$
are surjective for $n\geq 1$.
}

\

The proof is rather technical. The significant effort took to pass from the unibranch to the multibranch case can
be seen, for instance, in the statement of Lemma \ref{maroto}, which holds for unibranch non-Gorenstein points
by its very definition.

Max Noether's Theorem also corresponds positively to the case $p=0$ of Green's famous conjecture on
canonical curves. 
This naturally led us to the study of Koszul cohomology and  Clifford index, allowing torsion free sheaves of
rank 1 in their definitions. As a consequence, we have that the following five conditions are equivalent: (i)
$C$ is nonhyperelliptic; (ii) Rosenlicht's statement holds; (iii) Max Noether's statement holds;
(iv) $K_{0,2}(C,\ww)=0$; (v) ${\rm Cliff}(C)>0$ or $C$ is rational nearly normal (see Definition \ref{defnng}).
This is our Theorem \ref{ap1equiv}.

As mentioned above, once Max Noether's Theorem is introduced, it is natural to compute the dimensions
of the homogeneous components of the ideal of the canonical curve. It can be trivially read off Noether's
result. Our version for this in the case of certain non-Gorenstein curves is the following result,
obtained by blowdown procedures, as can be checked from its proof.

\medskip

\noindent \textit{ {\bf Theorem 2 }\ Let $C$ be an integral curve of genus $g$ which is complete over an algebraically closed field. Assume
the non-Gorenstein points are  unibranch at most. Let $\pi:\cb\to C$ be the normalization map, and
$\widetilde{\pi}:\widetilde{C}\to C$ be the partial dessingularization of the non-Gorenstein points.
 Let $\oo:=\oo_C$ be the structure sheaf, set $\ob:=\pi_{*}(\oo_{\cb})$ and
 $\widetilde{\oo}=\widetilde{\pi}_{*}(\oo_{\widetilde{C}})$. Then there exists an embedding
 $C\hookrightarrow \mathbb{P}^{g+2(\rho-\sigma)-1}$ such that
$$\dim(I_r(C))=\left(\begin{array}{c} r+g+2(\rho-\sigma)-1 \\
                                                    r \\ \end{array} \right)
  +g(1-2r)-2r(\rho-\sigma)+r-1 $$
  where $\rho=h^0(\ob/\mathcal{H}{\rm om}(\ob,\oo))-h^0(\widetilde{\oo}/\mathcal{H}{\rm om}(\widetilde{\oo},\oo))$
   and $\sigma=h^0(\ob/\oo)-h^0(\ob/\widetilde{\oo})$.
In particular,
$$\dim(I_2(C))= \displaystyle\frac{(g+2(\rho-\sigma)-1)(g+2(\rho-\sigma)-2)-2g}{2}. $$}

As we did for Clifford index and Koszul cohomology, we also allow torsion free sheaves of rank 1 in the
definition of gonality. From a geometric perspective, this corresponds to replace morphisms by pencils.
Our results concerning these three concepts are summarized in the following statement.

\medskip

\noindent \textit{ {\rm \bf {Theorem 3}} \ Let $C$ be a non-Gorenstein integral curve of genus $g$
which is complete over an algebraically
closed field.}
\begin{itemize}
\item[(i)] It holds $$2\leq {\gon}(C)\leq g$$
and if the upper bound is attained, then $C$ is Kunz with only one non-Gorenstein point, and either $C$ is
rational or $\cb$ is elliptic.
\item[(ii)] If $C$ is rational with a unique non-Gorenstein point, which is unibranch, then
$$
\gon(C)\leq \big\lfloor\frac{g+3}{2}\big\rfloor.
$$
\item[(iii)] for $g=5$, ${\rm Cliff}(C)=1$ if and only if $C$ is trigonal or there exists $\fff$ on $C$ such that
$\deg(\fff)=5$ and  $h^0(\fff)=3$; moreover, the latter condition is necessary;
\item[(iv)] If $C$ is nearly Gorenstein, then $K_{p,2}(C,\ww)=0$ for every $p<\eta$. Moreover, there
exists a family of curves $\{C_p\}_{p\geq 1}$ such that $\cliff(C_p)=1$ and $K_{p,2}(C_p,\ww)=0$.
\end{itemize}

The terms used above are in Definition \ref{defnng}. As the equivalence (iv)$\Leftrightarrow$(v) of
Theorem \ref{ap1equiv} shows that Green's assertion fails to hold if $p=0$, the family constructed in
the item (vii) above shows that it fails to hold for arbitrary $p\geq 1$ as well.

\




\section{Preliminaries}\label{prelim}

Let $C$ be a complete integral curve of arithmetic genus $g$ defined over an algebraically closed field with structure sheaf $\oo_C$,
or simply $\oo$. A \emph{linear system of dimension $r$ on $C$} is a set of the form
$$
\sys =\sys(\fff ,V):=\{x^{-1}\fff\ |\ x\in V\setminus 0\}
$$
where $\fff$ is a coherent fractional ideal sheaf on $C$ and $V$ is a vector subspace of
$H^{0}(\fff )$ of dimension $r+1$.

The notion of linear systems on curves presented here
is characterized by interchanging bundles by torsion free sheaves of rank $1$. This is a convenient
 approach for singular curves since they can possibly admit \emph{non-removable} base points, see for instance M.
Coppens' \cite{Cp}.

 The \emph{degree} of the linear system $\sys$ is the integer
$d:=\deg \fff :=\chi (\fff )-\chi (\oo)$, where $\chi$ denotes the Euler characteristic. Note, in particular, that if $\oo\subset\fff$ then
$$\deg\fff=\sum_{P\in C}\dim(\fff_P/\op).$$
The notation $g_{d}^{r}$ stands for a linear system of degree $d$ and dimension $r$.
The linear system is said to be \emph{complete} if $V=H^0(\fff)$, in this case one simply writes $\sys=|\fff|$.

Recall that a point $P\in C$ is \emph{Gorenstein} if the stalk $\ww_P$ is a free $\oo_P$-module,
where $\ww$ stands for the dualizing sheaf on $C$. 
The curve $C$ is said to be \emph{Gorenstein} if all of its points are so, or equivalently,
$\ww$ is invertible.



According to E. Ballico's \cite[p. 363, Dfn. 2.1 (3)]{Bal}, the gonality of C is the smallest d for which there exists a
$g_{d}^{1}$ in C, or equivalently, a torsion free sheaf $\fff$ of rank $1$ on $C$ with degree $d$ and
$h^0(\fff)=2$. A geometric motivation for this definition can be found, for instance, in \cite{RSt} for Gorenstein curves
 and \cite[Ex. 2.2]{AM} for non-Gorenstein ones.



Given a sheaf $\mathscr{G}$ on $C$, if $\varphi :\mathcal{X}\to C$ is a morphism from a scheme $\mathcal{X}$ to $C$, then we set
$\oo_{\mathcal{X}}\mathscr{G}:=\varphi^* \mathscr{G}/\rm{Torsion}(\varphi^*\mathscr{G})$. 
For each coherent sheaf $\fff$ on $C$ we set $\fff^n:=\rm Sym^n\fff/\rm Torsion (\rm {Sym}^n\fff)$. In particular,
if $\fff$ is invertible then clearly $\fff^{n}=\fff^{\otimes n}$.

Let us consider the normalization map $\pi :\cb\rightarrow C$.
In \cite[p.\,188\,top]{R} Rosenlicht showed that the linear
system $\sys(\oo_{\cb}\ww,H^0(\ww))$
is base point free. 
He considered then the induced morphism $\psi :\cb\rightarrow{\mathbb{P}}^{g-1}$
and called its image $C':=\psi(\cb)$ the canonical model of $C$.  Additionally, Rosenlicht also proved \cite[Thm.\,17]{R}
that if $C$ is nonhyperelliptic, then the map $\pi :\cb\rightarrow C$
factors through a map $\pi' : C'\rightarrow C$. So set $\oo':=\pi'_*(\oo_{C'})$ in this case.

Let $\widehat{C}:=\rm {Proj}(\oplus\,\ww ^n)$ be the blowup of $C$ along $\ww$ and
$\widehat{\pi} :\widehat{C}\rightarrow C$ be the natural morphism.
Set $\widehat{\oo}=\widehat{\pi}_*(\oo_{\widehat{C}})$ and $\widehat{\oo}\ww:=\widehat{\pi}_*(\oo _{\widehat{C}}\ww)$.
In \cite[Dfn.\,4.9]{KM} one finds
another characterization of the canonical model $C'$, it is the image of the morphism
$\widehat{\psi}:\widehat{C}\rightarrow{\mathbb{P}}^{g-1}$ defined by the linear system
$\widehat{\sys}(\oo_{\widehat{C}}\ww,H^0(\ww))$. By Rosenlicht's Theorem, since $\ww$ is
generated by global sections, we have that $\widehat{\psi}:\widehat{C}\rightarrow C'$ is an
isomorphism if $C$ is nonhyperelliptic.

It is known that the sheaf $\overline{\oo}\ww:=\pi_*(\oo_{\cb}\ww)$ can be generated by
the global sections of $\ww$, see \cite[p.\,188 top]{R}.
Since there are only a finite number of singular
points on $C$ and the ground field is infinite, one concludes that there is a
differential $\zeta\in H^0(\ww)$ such that
$(\overline{\oo}\ww)_P=\zeta\cdot\overline{\oo}_P$ for every singular point $P\in C$, where
$\overline{\oo}:=\pi _{*}(\oo _{\cb})$. This leads us to introduce the sheaf
$$\vl=\vl_{\zeta}:=\ww/\zeta \, .$$

\noindent If we consider $\ccc:=\mathcal{H}\rm {om}(\overline{\oo},\oo)$
the conductor of $\overline{\oo}$ into $\oo$, then we see that
$$
\ccc_P\subset
\oo_P \subset \vlp\subset\widehat{\oo}_P=\op'\subset\obp
$$
for every singular point $P\in C$, where the equality makes sense if and only if $C$ is nonhyperelliptic.
Set also $H^0(\vl)^n$ to  be the set of all finite sums of products of $n$ elements
from $H^0(\vl)$, and we just warn the reader not to confuse it with $H^0(\vl)^{\oplus n}$
which is different.

\begin{defi} \label{defnng}
\rm{Let $P\in C$ be any point. Set
$$
\eta_P:=\dim(\vlp/\op)=1\ \ \ \ \ \ \ \ \ \ \ \mu_P:=\dim({\oo_{P}'}/\vlp)
$$
and also
$$
\eta:=\sum_{P\in C}\eta_P\ \ \ \ \ \ \ \ \ \ \mu:=\sum_{P\in C}\mu_P
$$
Following \cite[pp. 418, 433, Prps. 21, 28]{BF} call $P$ \emph{Kunz} if $\eta_P=1$ and,
accordingly, say $C$ is \emph{Kunz} if all of its non-Gorenstein points are so; quite similarly,
say a non-Gorenstein point is \emph{almost Gorenstein} if $\mu_P=1$. Any Kunz point is almost
Gorenstein as well by \cite[Prp. 21]{BF}. Now, following \cite[Dfn. 5.7]{KM}, call $C$ {\it nearly
Gorenstein\/} if $\mu=1$, and following \cite[Dfn. 2.15]{KM}, call $C$ \emph{nearly normal} if
$h^0(\oo/\mathscr{C})=1$. These curves are characterized by important properties, namely, $C$ is
nearly Gorenstein iff it is non-Gorenstein and $C'$ is projectively normal, owing to \cite[Thm. 6.5]{KM},
and $C$ is nearly normal iff $C'$ is arithmetically normal, owing to \cite[Thm. 5.10]{KM}.}
\end{defi}

Let $P\in C$. 
We say that $P$ is
\emph{monomial} provided that the completion
$\widehat{\op} =k[[t_1^{n_{11}}\cdots t_s^{n_{s1}},\,\ldots\,,t_1^{n_{1r}}\cdots t_s^{n_{sr}}]]$,
where $t_{1},\ldots,t_{s}$ are local parameters at $\pb_1,\ldots,\pb_s$.

We also recall the concepts of Clifford index, and Koszul cohomology, applied here for curves within a little bit broadest sense.
Let $\fff$ be a torsion free sheaf of rank 1 on $C$. According to \cite[p. 363 Dfn. 2.2 (7)]{Bal}, we introduce the Clifford Index $C$ as:
$$\cliff(C)=\min\{\deg\,\fff-2(h^0(\fff)-1)\, ;\, h^0(\fff)\geq 2 \rm{\ and\ } h^1(\fff)\geq 2\}$$




\noindent According to \cite{ApF, Gr}, consider the complex
$$
\wedge^{p+1}H^0(\fff)\otimes H^0(\fff^{q-1})\stackrel{\phi_{p,q}^1}{\longrightarrow}\wedge^{p}H^0(\fff)\otimes H^0(\fff^{q})\stackrel{\phi_{p,q}^2}{\longrightarrow} \wedge^{p-1}H^0(\fff)\otimes H^0(\fff^{q+1})
$$
The quotient
$$
K_{p,q}(C,\fff):= \ker(\phi_{p,q}^2)/{\rm im}(\phi_{p,q}^1)
$$
is said to be the \emph{$(p,q)$-th Koszul cohomology} of $\fff$.

We recall Green's conjecture for smooth curves \cite{Gr}:
$$K_{p,2}(C,\omega)=0 \Leftrightarrow p < \cliff(C).$$
One can find a deep study of the whole problem, for instance, in \cite{ApF}. The conjecture was proved for
general regular curves by C. Voisin in \cite{Vo1, Vo2} and even for a class of singular ones as can be seen,
for example, in the recent article \cite{FT} and references therein. 

We will see later on, when dealing with Green's conjecture, that we need to allow torsion free
sheaves of rank $1$ on this definition since $\omega$ is not a bundle if $C$ is non-Gorenstein

\section{Proof of Theorem 1}\label{proofthm1}
%
%

In order to establish Max Noether's Theorem for integral curves,
we first recall some required subject on valuations.

Let $P\in C$ be a point and  $\pb_1,\dots,\pb_s$ the points on $\cb$ over $P$.
Given any non identically null meromorphic function $x\in k(C)$ the \emph{order of
$x$ at $P$} is the $s$-tuple of integers
$v_{P}(x):=(v_{\pb _{1}}(x),\ldots ,v_{\pb_{s}}(x))\in{\mathbb{Z}} ^{s}$, where
$v_{\pb_i}$ is the valuation of the discrete valuation ring $\oo_{\cb,\pb_i}$.
The \emph{semigroup of values} of $P$ is
${\rm S}:=v_{P}(\op )$.
Since $\op$ is a ring, $\rm S$ is a sub-semigroup of $\mathbb{Z}^s$, i.e.
it is closed under addition and the zero-element $(0,\dots,0)$ belongs to $\rm S$.
Additionally, one can verify the following:
\begin{itemize}
\item if $a, b \in \rm S$ then $ \min (a,b) \in \rm S$
\item if $a, b \in \rm S$ and $a_i=b_i$ then there exists $\varepsilon \in \rm S$ such that
$\varepsilon_i > a_i=b_i$ and $\varepsilon_j \geq { min}(a_j,b_j)$ where the equality
happens if $a_j\neq b_j$.
\end{itemize}
We also pick up the following elements of $\rm S$
$$\alpha:={\min}({\rm S}\setminus\{ 0\})\ \ \ \ {\rm and} \ \ \ \ \beta:={\min} (v(\cp)).$$
The partial order we consider here is the natural one: $a\leq b$ if and only if $a_i\leq b_i\, \forall i$.
Note that the elements $\alpha$ and $\beta$ are well defined.

Now given any $a:=(a_1,\ldots,a_s)\in{\mathbb{Z}} ^{s}$ we denote
$$|a|:=a_1+\ldots+a_s$$
and if $\rm E$ is a subset of ${\mathbb{Z}} ^{n}$ one defines
$$
\Delta ^{\rm E}(a):=\{ b\in {\rm E}\ |\  b_{i}=a_{i}\ \rm {for\ some}\ i,\ \rm {and}\ b_{j}>a_{j}\ \rm {if}\ j\neq i\}\, ,
$$
$${\rm E}^*:=\{a\in {\rm E}\,|\, a\leq\beta\} \ \ \ \ {\rm {and}} \ \ \ \ {\rm E}^{\circ}:=\{a\in {\rm E}\, |\, a<\beta\}$$
The \emph{Frobenius vector} of $\rm S$ is $\gamma :=\beta -(1,\ldots ,1)$ and one sets
$${\rm K}= { \rm K}_{P}:=\{ a\in{\mathbb{Z}} ^{s}\ |\ \Delta ^{\rm S}(\gamma -a)=\emptyset\} $$
In order to prove Theorem 1, we start with the following result. 

\begin{lem}\label{maroto}
Let $P\in C$ be a s-branch non-Gorenstein point with semigroup of values $\rm S$.
There exists $d\in \rm K^{\circ}\setminus \rm S$
such that $\beta-d-e_{\ell}\in \rm K^{\circ}$ for some $\ell=1,\dots,s$, where
$\{e_1,\ldots,e_s\}$ is the canonical basis of $\mathbb{N}^s$.
\end{lem}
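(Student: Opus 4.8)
The plan is to work entirely inside the value semigroup $\mathrm{S} \subset \mathbb{N}^s$ of the non-Gorenstein point $P$, using the combinatorial characterization of the canonical module via the Frobenius vector $\gamma$ and the set $\mathrm{K}$. Recall that $P$ being non-Gorenstein means exactly that $\mathrm{K} \supsetneq \mathrm{S}$ (the symmetry $a \in \mathrm{S} \iff \gamma - a \notin \mathrm{S}$ fails), so $\mathrm{K} \setminus \mathrm{S}$ is nonempty; moreover one knows $\gamma \notin \mathrm{S}$ and $0 \in \mathrm{S}$, so there is genuine room between $0$ and $\beta = \gamma + (1,\dots,1)$. The first step is to show $\mathrm{K}^\circ \setminus \mathrm{S} \neq \emptyset$: any element of $\mathrm{K} \setminus \mathrm{S}$ can be taken below $\beta$ because $\mathrm{K}$ is stable under the operation $a \mapsto \min(a,\beta)$-type truncations (using that $\Delta^{\mathrm{S}}$ conditions only see coordinates, together with the two semigroup axioms listed in the text), and in fact strictly below $\gamma$ in at least enough coordinates; this is where the definition $\mathrm{K}^\circ = \{a \in \mathrm{K} : a < \beta\}$ is exploited.

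**The core selection.**

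The second and main step is the choice of the \emph{right} $d$. The idea is to pick $d$ maximal (with respect to $\leq$, or maximal in $|\cdot|$) among elements of $\mathrm{K}^\circ \setminus \mathrm{S}$. Maximality should force, for each coordinate $\ell$, that $d + e_\ell$ is either no longer in $\mathrm{K}^\circ$ or lands in $\mathrm{S}$; combining this with the defining condition $\Delta^{\mathrm{S}}(\gamma - d) = \emptyset$ and unwinding what $\Delta^{\mathrm{S}}(\gamma - d - e_\ell + e_\ell)$-type sets look like, one extracts the desired $\ell$ for which $\beta - d - e_\ell \in \mathrm{K}^\circ$. Concretely, $\beta - d - e_\ell \in \mathrm{K}$ means $\Delta^{\mathrm{S}}(\gamma - (\beta - d - e_\ell)) = \Delta^{\mathrm{S}}(d + e_\ell - (1,\dots,1))$ is empty; and $\beta - d - e_\ell < \beta$ is automatic once $d \geq 0$ in coordinate $\ell$ (which holds since $d \in \mathrm{S}$... no — $d \notin \mathrm{S}$, but $d \geq 0$ still holds as $d \in \mathrm{K}^\circ$ and $\mathrm{K}$ sits above $0$). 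So the real content is the emptiness of $\Delta^{\mathrm{S}}(d + e_\ell - (1,\dots,1))$, equivalently a statement about whether $d + e_\ell - (1,\dots,1)$ can be "completed upward" inside $\mathrm{S}$ along a single coordinate.

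**The expected obstacle.**

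The hard part will be the multibranch bookkeeping in this second step: when $s \geq 2$, the set $\Delta^{\mathrm{S}}(a)$ records elements of $\mathrm{S}$ agreeing with $a$ in one coordinate and strictly exceeding it elsewhere, and one must track simultaneously the failure of $d$ to be in $\mathrm{S}$, the condition $\Delta^{\mathrm{S}}(\gamma - d) = \emptyset$, and how adding a single basis vector $e_\ell$ interacts with both. The two auxiliary semigroup axioms (closure under $\min$, and the "completion" axiom producing $\varepsilon \in \mathrm{S}$ with a strict increase in a prescribed coordinate) are exactly the tools that make the unibranch-style argument go through branch-by-branch, but getting the coordinate $\ell$ to be uniform rather than branch-dependent is the delicate point — this is presumably why the lemma is stated as an existence statement for a single $\ell$ and why, as the authors remark, it is automatic when $s = 1$. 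I would first write out the $s=1$ case to fix the mechanism (there $\mathrm{K}^\circ \setminus \mathrm{S} \neq \emptyset$ is immediate from non-symmetry of the numerical semigroup, and $\beta - d - 1 \in \mathrm{K}^\circ$ follows from $d \in \mathrm{K}$ plus a gap count), then promote each assertion to $\mathbb{N}^s$, handling the coordinate selection by a maximality/extremality argument as above and invoking the completion axiom to rule out the bad cases.
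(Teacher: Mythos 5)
Your proposal is a plan rather than a proof: the decisive combinatorial step is never carried out, and the selection principle you propose points in the wrong direction. You choose $d$ \emph{maximal} in $\mathrm{K}^{\circ}\setminus\mathrm{S}$ and hope that maximality, via the dichotomy ``$d+e_\ell\notin\mathrm{K}^{\circ}$ or $d+e_\ell\in\mathrm{S}$'', will yield an $\ell$ with $\Delta^{\mathrm{S}}(d+e_\ell-(1,\dots,1))=\emptyset$. But that dichotomy only gives you information about $\Delta^{\mathrm{S}}(\gamma-d-e_\ell)$ (the ``dual side'' of $d$, near $\gamma-d$), whereas the set you must show empty, $\Delta^{\mathrm{S}}(d+e_\ell-(1,\dots,1))$, consists of semigroup elements sitting just \emph{below} $d$; nothing in your sketch connects the two. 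The paper takes $d$ \emph{minimal} in $\mathrm{K}^{\circ}\setminus\mathrm{S}$, and minimality is the actual lever: assuming for contradiction that every $\Delta^{\mathrm{S}}(d-(1,\dots,0,\dots,1))$ is nonempty, each witness $b^{\ell}$ either already dominates $d$ with equality in the $\ell$-th coordinate, or has some coordinate equal to $d_i-1$; in the latter case $\min(b^{\ell},d)=d-e_i$ lies in $\mathrm{K}^{\circ}$ (min-closure of $\mathrm{K}=v_P(\mathscr{W}_P)$) and hence, \emph{by minimality of $d$}, in $\mathrm{S}$, after which the completion axiom manufactures an element $c_\ell\in\mathrm{S}$ with $c_\ell\geq d$ and $(c_\ell)_\ell=d_\ell$. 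Then $d=\min(c_1,\dots,c_s)\in\mathrm{S}$, a contradiction. With a maximal $d$ you have no control over elements of $\mathrm{K}^{\circ}$ lying below $d$, so this mechanism is unavailable, and you offer no substitute.

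Two smaller gaps: your justification that $\beta-d-e_\ell<\beta$ (``automatic once $d\geq 0$ in coordinate $\ell$'') is off — the condition is $d_j\geq 1$ for all $j\neq\ell$, not a statement about the $\ell$-th coordinate alone; and your first step, the nonemptiness of $\mathrm{K}^{\circ}\setminus\mathrm{S}$, is asserted via an unproved ``truncation stability'' of $\mathrm{K}$ rather than the standard identification $v_P(\mathscr{W}_P)=\mathrm{K}$ together with $\mathcal{O}_P\subsetneq\mathscr{W}_P\subset\overline{\mathcal{O}}_P$. Your reading of the $s=1$ case is correct (there the statement is equivalent to non-Gorensteinness, as the paper notes), but the passage to $s\geq 2$ is precisely where a concrete argument is needed and where your outline stops.
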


\begin{proof}
First of all note that if $P$ is unibranch, the existence of such an element $d$ is equivalent to $P$ be a
non-Gorenstein point. Now suppose $P$ is $s$-branch. Take $d \in \rm K^{\circ}\setminus \rm S$ minimal,
i.e, such that there is no element in $\rm K^{\circ}\setminus \rm S$ smaller than $d$.
It is enough to show that, for some $\ell=1, \dots, s$, we have
$$\Delta^{\rm S}(\gamma - (\beta-d-e_{\ell}))= \Delta^{\rm S}(d-(1, \dots,0, \dots, 1))=\emptyset,$$
where $0$ is at the $\ell$-th coordinate.
Suppose by contradiction that for all $\ell=1, \dots, s$ there are elements
$b^{\ell} \in \Delta^{\rm S}(d-(1, \dots,0, \dots, 1))$ in $\rm S$. So each $b^{\ell}$ may be only of two kinds:

1) $b^{\ell}$ is such that $b^{\ell}_{\ell}=d_{\ell}$ and $b^{\ell}_j\geq d_j$ for $i\neq \ell$, i.e,
$$b^{\ell}=(d_1+x_1,\dots, d_{\ell},\dots, d_s+x_s),$$
with $x_i\geq 0$ not simultaneously zero since $d\not\in \rm S$.

2) $b^{\ell}$ is such that $b^{\ell}_i=d_i-1$, with $i \neq \ell$, $b^{\ell}_j\geq d_j$ for $j \neq \ell, i$ and $b^{\ell}_{\ell}>d_{\ell}$, i.e,
$$b^{\ell}=(d_1+x_1,\dots, d_i-1, \dots,d_s+x_s),$$
with $x_{\ell}>0$ and $x_j\geq 0$ for $j \neq \ell,i$. At this case, let
$$s_{\ell}={\rm min}(b^{\ell},d)=(d_1, \dots, d_i-1, \dots,d_s)\in \rm K^{\circ}.$$
By the minimality of $d$, we have $s_{\ell} \in \rm K^{\circ} \cap \rm S$. As $b^{\ell}$ and $s_{\ell}$ have the
same $i$-th coordinates, there is an element $s_{\ell}'=(d_1+y_1,\dots, d_{\ell},\dots,d_s+y_s)$ in
$\rm S$, with $y_i \geq 0$ not simultaneously zero since $d \not\in \rm S$.

Define
$$
c_{\ell}=\left\lbrace\begin{array}{ll}
b^{\ell}, &  \rm {if\ b^{\ell}\ is\ of\ the\ kind\ 1}\\
s_{\ell}', & \rm {if\ b^{\ell}\ is\ of\ the\ kind\ 2}
\end{array}\right.
$$
Therefore, $d={\rm min}(c_1, \dots, c_s) \in \rm S$ and we have a contradiction.
\end{proof}

Now we are able to prove our main result.

\medskip


\noindent {\it Proof of Theorem 1\ \ }
First off, if $C$ is smooth, the statement holds as \cite[p. 117]{ACGH}. Such result is
a consequence of the projective normality of the extremal curves. Rosenlicht proved
at \cite{R} that if $C$ is (nonhyperelliptic) Gorenstein, then  $C'$ is extremal and
$C\cong C'$, so Max Noether's statement holds.
On the other hand, if $C$ is non-Gorenstein one should adjust the proof of \cite[Thm. 3.7]{Mt},
where the statement was proved to unibranch non-Gorenstein points. It's a long
(but not actually hard) task verifying that ``unibranch" is only really needed precisely in the
proof of \cite[Lem. 3.2, stp. 2]{Mt}. So one just have to check that $\cp /t^{\beta-\alpha}\cp$
is generated by elements in $H^0(\vl)^2$. Now from  \cite[Thm. 2.11]{St} or \cite[Pr.p 2.14.(iv)]{BDF}
we have $v_P(\vlp)=\rm K$. So we just have to prove that there exists a sequence
\begin{equation}
\label{equseq}
a_1=\beta<a_2<a_3<\ldots< a_{|\beta|-|\alpha|}<2\beta-\alpha
\end{equation}
such that all the $a_i$ are in $G:=\{a+b\,|\,a,b\in \rm K^{\circ}\}$.

We may assume $\alpha <\beta$ for otherwise the sequence is empty. Call
$$
\alpha^{n}:=\rm {min}(n\alpha, \beta)
$$
with $n\in \mathbb{N}$. Let $r$ be the smallest integer such that $(r+2)\alpha>\beta$ and
write $\beta =\alpha^{r+1} +u$, with $ 0\leq u<\alpha$, where $0=(0,\dots,0)\in \mathbb{N}^s$.
So we may build the sequence (\ref{equseq}) writing each of its element as
$$
a=\beta+\alpha^{n+1}-\alpha+v
$$
for

(i) $0\leq n\leq r-1$ and $0\leq v\leq \alpha-e_{\ell}$

(ii) $n=r$ and $0\leq v<u$,

\noindent where we disregard (i) if $r=0$ and (ii) if $u=0$. Now write
$$
a=\beta+\alpha^{n+1}-\alpha+v=(\beta-\alpha+v)+\alpha^{n+1}.
$$
We have, for every $0\leq n\leq r$, that $\alpha^{n+1}$ is in $\rm S^{\circ}$  and hence in $\rm K^{\circ}$
as well. Moreover, if $v\neq \alpha-e_i$ for every $i$, then $\beta-\alpha+v$ is also in $\rm K^{\circ}$.
Indeed, if there exists $b\in\Delta^{\rm S}(\gamma-(\beta-\alpha+v))=\Delta^{\rm S}(\alpha-v-(1,\dots,1))$
then $b=0$, $v_i=\alpha_i-1$ for some $i$ and  $v_j>\alpha_j-1$ for $j\neq i$. But this happens
if and only if $v= \alpha-e_i$. Note that in (ii), we have $v<u \leq \alpha-e_{i}$ for every $i$, hence
the statement is proved if $r=0$. Hence, it suffices to prove that $\beta+\alpha^{n+1}-e_{\ell}\in G$
for every $0\leq n\leq r-1$. For simplicity, replace $n$ by $n+1$, so it is enough checking that
$\beta+\alpha^n-e_{\ell}\in G$ for every $1\leq n\leq r$.

Let $m$ be the largest integer such that $\alpha^{m+1}=(m+1)\alpha$, i.e, such that $(m+1)\alpha\leq\beta$.
If $n$ is such that $m< n \leq r$, then $\alpha^{n+1}<(n+1)\alpha$ and we know that
$0<\alpha^{n+1}-\alpha^n<\alpha$. So, there exists $v$ with $0\leq v <\alpha-e_{\ell}$ for which
$a=\alpha^n+\beta-e_{\ell}=\alpha^{n+1}+\beta-\alpha+v$. In this case, since $v\neq \alpha-e_{\ell}$,
we have seen that $a\in G$.

On the other hand, let $n$ be such that $1\leq n\leq m\leq r$. Let $d:=d_1$ be as at the Lemma
\ref{maroto} and $d_2:=\beta-d-e_{\ell}$. For every $1\leq n\leq m$ we can find natural numbers
$q_{n1},q_{n2}$ such that $n={q_{n1}}+{q_{n2}}$ and ${q_{nj}}\alpha +d_j<\beta$ for $j=1,2$. In fact,
let $q_{m2}$ be the largest integer such that ${q_{m2}}\alpha\leq d_1$ and $q_{m1}:=m-q_{m2}$.
If $n<m$ we may take $q_{n1}:=\rm {min}\{ n,q_{m1}\}$ and $q_{n2}:=n-q_{n1}\leq q_{m2}$. Thus,
it suffices proving for $n=m$. Assume, without loss in generality, that $d_1\leq d_2$.
As $d_1+d_2=\beta -e_{\ell}$, we have $2d_1<\beta$ and, since $m\geq 1$, we have $2\alpha\leq\beta$.
Moreover, $(q_{m2} +1)\alpha\leq d_1+\alpha\leq 2\,\rm {max}\{ d_1,\alpha\} \leq\beta$ and therefore
$m\geq q_{m2}$ by the very definition of $m$. This implies that $q_{m1}\geq 0$. So, we conclude that
$$
{q_{m2}}\alpha +d_2\leq d_1+d_2<\beta
$$
and
$$\begin{array}{ll}
q_{m1}\alpha +d_1 &=(m-q_{m2})\alpha +d_1 \\
                                   &=m\alpha -{q_{m2}}\alpha +d_1 \\
                                   &\leq(\beta -\alpha )+ (d_1-{q_{m2}} \alpha) < \beta
\end{array}$$
since $0\leq d_1-{q_{m2}}\alpha<\alpha$ by the definition of $q_{m2}$.

Since $\vlp$ is an $\op$-module, the definition of $q_{nj}$ implies $a_{nj}:={q_{nj}}\alpha+d_j\in K^{\circ}$
for every $1\leq n\leq m$ and $j=1,2$. Therefore
$$\begin{array}{ll}
a_{n1}+a_{n2} &={q_{n1}}\alpha+d_1+{q_{n2}}\alpha+d_2\\
              &=({q_{n1}+q_{n2}})\alpha+(d_1+d_2) \\
              &= n\alpha +\beta-e_{\ell} =\beta+\alpha^n-e_{\ell}
\end{array}$$
and the latter is hence  in $G$ whatever are $1\leq n\leq m$, as we wish. \qed

\

In the sequel, we offer an example verifying Max Noether's statement at level $2$ for a simple rational
non-Gorenstein curve. Note that the argument is similar to the (unibranch) case of the proof.

\begin{exam}\emph{
Let $C$ be the projective closure of the affine monomial curve
$$
{\rm {Spec}}\,k[t^3,t^7,t^{10},t^{11}]
$$
It has just one singular point, say $P$, with
$$
\oo_{P}=k\oplus kt^3\oplus kt^6\oplus kt^7\oplus kt^9\overline{\oo}_{P}
$$
and genus $5$.  We have that $P$ is non-Gorenstein since
$t^4 \in \vlp \setminus \oo_P$, and one can check that $H^0(\vl) =\langle 1; t^3; t^4; t^6; t^7\rangle$. 
The rings do not
coincide because $P$ is non-Gorenstein for $\vlp$ is not a free $\op$-module. To prove Max Noether's statement
at level $2$, note that $t^8,t^9,t^{10}, \ldots,t^{14}\in H^0(\vl^2)$. Besides,
$$
\deg_Q(\vl^2)=
\left\lbrace\begin{array}{ll}
2 & \rm {if}\  Q=P \\
14 & \rm {if}\  Q=\infty \\
0 & \rm {otherwise}
\end{array}\right.
$$
therefore $\deg\vl^2=16$. Since $h^0(\vl^2)=16+1-5=12$, the above elements are the ones needed to complete
$H^0(\vl^2)$. We just warn the reader that we had to compute $\deg\vl^2$ because it is not true in general
that it is $2\deg\vl$ if $C$ is non-Gorenstein. For instance, if $C$ is nearly Gorenstein but not Kunz then this
property does not hold. Writing the new elements as below we establish  Max Noether's statement for $n=2$:
$$
\begin{array}{llllllll}
t^{8}=t^4t^4, & & t^{9}=t^3t^6, & & t^{10}=t^3t^7, &  t^{11}=(t^3t^4)(t^4),  \\
t^{12}=(t^3t^3)t^6, & & t^{13}=(t^3t^3)t^7, & & t^{14}=(t^{3}t^4)(t^{3}t^4). &
\end{array}
$$ }
\end{exam}

\section{Applications}\label{app}

This section is devoted to the study of some applications of our main result Theorem 1.
We start with the equivalences below.



\begin{thm}\label{ap1equiv}
The following are equivalent
\begin{itemize}
\item[\rm (i)] $C$ is nonhyperelliptic;
\item[\rm (ii)] There exists a birational morphism $C'\to C$, i.e., Rosenlicht's statement holds;
\item[\rm (iii)] The maps ${\rm Sym}^nH^0(\ww)\longrightarrow H^0(\ww^n)$ are surjective , i.e., Max Noether's statement holds;
\item[\rm (iv)] $K_{0,2}(C,\ww)=0$;
\item[\rm (v)] $\cliff(C)>0$ or $C$ is rational nearly normal.
\end{itemize}
\end{thm}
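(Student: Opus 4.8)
The plan is to prove the chain of implications
$$
\mathrm{(i)}\Rightarrow\mathrm{(iii)}\Rightarrow\mathrm{(ii)}\Rightarrow\mathrm{(i)},\qquad
\mathrm{(iii)}\Leftrightarrow\mathrm{(iv)},\qquad \mathrm{(i)}\Leftrightarrow\mathrm{(v)}.
$$
The implication $\mathrm{(i)}\Rightarrow\mathrm{(iii)}$ is precisely Theorem 1, so nothing is left to do there. For $\mathrm{(iii)}\Rightarrow\mathrm{(ii)}$ I would invoke \cite[Rem. 2.8]{Mt}, cited in the introduction, which says exactly that Max Noether's statement implies Rosenlicht's statement; alternatively, surjectivity of the multiplication maps forces the Proj of $\oplus \mathrm{Sym}^nH^0(\ww)$ to agree with $\widehat{C}=\mathrm{Proj}(\oplus\,\ww^n)$, and $\widehat{\psi}:\widehat{C}\to C'$ realizes the birational morphism $C'\to C$. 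For $\mathrm{(ii)}\Rightarrow\mathrm{(i)}$ one uses Rosenlicht's Theorem in the contrapositive form: if $C$ is hyperelliptic then the canonical map $\psi:\cb\to\mathbb{P}^{g-1}$ is two-to-one onto a rational normal curve, so $C'$ is that rational curve and there is no birational morphism $C'\to C$ (degree reasons). I expect these three steps to be short, essentially bookkeeping on results already in the literature and recalled above.

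For $\mathrm{(iii)}\Leftrightarrow\mathrm{(iv)}$ I would unwind the definition of Koszul cohomology with $\fff=\ww$, $p=0$, $q=2$. The complex reads
$$
\wedge^{1}H^0(\ww)\otimes H^0(\ww)\xrightarrow{\ \phi^1_{0,2}\ }\wedge^{0}H^0(\ww)\otimes H^0(\ww^2)\xrightarrow{\ \phi^2_{0,2}\ }\wedge^{-1}H^0(\ww)\otimes H^0(\ww^3),
$$
where the last term vanishes, so $K_{0,2}(C,\ww)=\mathrm{coker}(\phi^1_{0,2})$ and the map $\phi^1_{0,2}:H^0(\ww)\otimes H^0(\ww)\to H^0(\ww^2)$ is (up to sign) the multiplication map. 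Its image is the degree-$2$ piece of the image of $\mathrm{Sym}H^0(\ww)\to \oplus H^0(\ww^n)$; more precisely, $H^0(\ww)\otimes H^0(\ww)$ surjects onto $\mathrm{Sym}^2H^0(\ww)$, which then maps to $H^0(\ww^2)$, and the composite has the same image. Hence $K_{0,2}(C,\ww)=0$ iff $\mathrm{Sym}^2H^0(\ww)\to H^0(\ww^2)$ is surjective, which is the $n=2$ instance of $\mathrm{(iii)}$. So $\mathrm{(iii)}\Rightarrow\mathrm{(iv)}$ is immediate; for $\mathrm{(iv)}\Rightarrow\mathrm{(iii)}$ I would argue that the $n=2$ surjectivity already forces $C$ to be nonhyperelliptic — by the $\mathrm{(ii)}\Leftrightarrow\mathrm{(i)}$ circle, or directly, since for hyperelliptic $C$ the image of $\cb$ is a rational normal curve whose quadratic piece is strictly smaller than $H^0(\ww^2)$ — and then apply Theorem 1 to get surjectivity for all $n$. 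This is the step where I would be most careful: I must make sure the failure of $n=2$ surjectivity in the hyperelliptic case is genuinely a dimension count (comparing $\dim\mathrm{Sym}^2H^0(\ww)$, resp. the rank of the multiplication map, with $h^0(\ww^2)$, which one computes by Riemann–Roch noting that $\ww^2$ need not have degree $2\deg\ww$ in the non-Gorenstein case, exactly the subtlety flagged in the Example).

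Finally, for $\mathrm{(i)}\Leftrightarrow\mathrm{(v)}$ I would split into cases by whether $C$ is rational. If $C$ is nonhyperelliptic and not rational, Clifford's theorem for integral curves together with the nonexistence of a $g^1_2$ gives $\cliff(C)>0$, so $\mathrm{(v)}$ holds. If $C$ is nonhyperelliptic and rational, then $\ccc=\mathcal{H}\mathrm{om}(\ob,\oo)$ must satisfy $h^0(\oo/\ccc)=1$: a rational curve with a ``large'' non-Gorenstein singularity would carry a pencil of degree $2$, contradicting nonhyperellipticity, so $C$ is nearly normal by Definition \ref{defnng}, again giving $\mathrm{(v)}$. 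Conversely, if $\cliff(C)>0$ then no torsion free rank-$1$ sheaf $\fff$ with $h^0\ge 2$, $h^1\ge 2$ computes Clifford index $0$; in particular there is no $g^1_2$, hence $C$ is nonhyperelliptic. And if $C$ is rational nearly normal, one shows directly from $h^0(\oo/\ccc)=1$ that $C$ admits no pencil of degree $2$ — the normalization $\cb=\mathbb{P}^1$ has $\overline{\kappa}$ of degree $1$ onto its image because the singularity is too small to collapse the $g^1_2$ on $\mathbb{P}^1$ — so $C$ is nonhyperelliptic. The main obstacle in this last equivalence is the rational nearly normal case: I would need the precise relation between nearly normal, the conductor, and the (non)existence of a degree-$2$ pencil, which should follow from \cite[Thm. 5.10]{KM} identifying nearly normal curves with those whose canonical model is arithmetically normal, combined with the structure of $C'$ when $\cb=\mathbb{P}^1$.
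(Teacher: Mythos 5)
Your treatment of the circle (i)--(iv) is essentially the paper's: (i)$\Rightarrow$(iii) is Theorem 1, (iii)$\Rightarrow$(ii) is \cite[Rem. 2.8]{Mt}, (ii)$\Leftrightarrow$(i) is Rosenlicht, and $K_{0,2}(C,\ww)=0$ is unwound as surjectivity of ${\rm Sym}^2H^0(\ww)\to H^0(\ww^2)$; the only organisational difference is that the paper gets (iv)$\Rightarrow$(iii) by observing (via the intrinsic arguments of \cite[Thm. 3.7]{Mt}) that level-$2$ surjectivity already forces surjectivity at all levels, whereas you route through ``hyperelliptic curves fail level $2$'' and then reapply Theorem 1 --- both work, and your flag about $\deg(\ww^2)\neq 2\deg\ww$ in the non-Gorenstein case is the right thing to worry about.

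The genuine gap is in (i)$\Leftrightarrow$(v), where your case analysis is wrong in both directions. Forward: you claim that a nonhyperelliptic \emph{rational} curve must be nearly normal. This is false; the paper's own example ${\rm Spec}\,k[t^3,t^7,t^{10},t^{11}]$ is rational, nonhyperelliptic, and has $h^0(\oo/\ccc)=4\neq 1$. For such a curve (v) holds through the \emph{first} disjunct, $\cliff(C)>0$, not the second, and proving that requires more than ``nonexistence of a $g^1_2$'': one must rule out sheaves of every degree attaining Clifford index $0$, which is exactly the equality analysis of \cite[App.]{EKS}, namely $\cliff(C)=0$ iff $C$ is hyperelliptic or rational nearly normal. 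Backward: you claim that a rational nearly normal curve ``admits no pencil of degree $2$.'' The opposite is true: by \cite[Thm. 3.4]{KM} (or \cite[Thm. 2.1]{Mt}, as used in the proof of Proposition \ref{ap3gonacliff}), the curves of gonality $2$ are precisely the hyperelliptic ones \emph{together with} the rational nearly normal ones, so these curves do carry a $g^1_2$; they are nonhyperelliptic for a different reason, namely that they are non-Gorenstein (so the dualizing sheaf is not a line bundle and the degree-$2$ pencil does not come from a double cover, i.e.\ the canonical map of $\cb$ is still birational onto $C'$). The conflation of ``has a degree-$2$ pencil'' with ``is hyperelliptic'' is exactly the distinction this torsion-free-sheaf framework is designed to keep apart, and it is the hinge of the whole equivalence: the correct argument is simply that $\cliff(C)=0$ iff $C$ is hyperelliptic or rational nearly normal \cite[App.]{EKS}, and nearly normal curves, being non-Gorenstein, are nonhyperelliptic.
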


\begin{proof}
We know that (i) and (ii) are equivalent by Rosenlicht \cite{R}. We also know that (iii) implies (i), and, by what
was mentioned above, implies (ii) as well. By Theorem 1, (i) implies (iii) and the first three items are linked.
Now (iv) is Max Noether's statement at level $2$. This suffices to establish (i)$\Leftrightarrow$(iv) in the smooth
case, since hyperelliptic curves clearly do not satisfy (iv). In a broader context, one needs, for instance,
 to follow the whole intrinsic arguments in \cite[Thm. 3.7]{Mt} to see, as mentioned in the very proof of
 Theorem 1, that Max Noether's statement at level $2$ is sufficient to establish the whole result. So the first four
 items are linked as well. Now, according to \cite[App]{EKS}, $\cliff(C)=0$ if and only if $C$ is hyperelliptic
 or rational nearly normal. Since nearly normal curves are non-Gorenstein, they are nonhyperelliptic as well,
 and the result follows.
\end{proof}

As mentioned in the Introduction, one of the first consequences of the re\-gu\-lar version of
Max Noether's Theorem, which is also valid for Gorenstein curves,
is that a canonical curve $C$ lies in the intersection of some quadrics,
more precisely:

\medskip
Let $I_r(C)$ be the vector space of $r$-forms vanishing on a smooth nonhyperelliptic canonical curve $C$. We have
$$\dim(I_2(C))=\frac{(g-2)(g-3)}{2}.$$


\noindent In order to generalize this result to non-Gorenstein curves, we use the extrinsic part of the proof of
Max Noether's Theorem for nearly Gorenstein curves presented in \cite[Thm. 2.6]{Mt}. Let us fix
some required notation.

Let $\widetilde{C}$ be the curve obtained by the desingularization of all non-Gorenstein points of $C$ through successive
blowups. Thus we obtain a sequence of birational morphisms $\overline{C}\rightarrow\widetilde{C}\rightarrow C'\rightarrow C$.
As usual, if $\widetilde{\pi}:\widetilde{C}\rightarrow C$ is the natural birational morphism, then we set
$\widetilde{\oo}:=\widetilde{\pi}_{*}(\oo_{\widetilde{C}})$ and $\widetilde{\mathscr{C}}:={\rm Hom}(\widetilde{\oo},\oo)$.

Now, our Theorem 2 corresponds to the second item below:

\begin{thm}\label{ap2quadrics}
Let $C$ be a non-Gorenstein curve of genus $g$.
\begin{itemize}
\item[(i)] If $\widehat{C}$ is the blowup of $C$ along $\ww$, then there is an embedding $\widehat{C}\hookrightarrow \mathbb{P}^{g-2+\mu}$ such that
$$\dim(I_r(\widehat{C}))= \left(\begin{array}{c} r+g-2+\mu \\
                                                    r \\ \end{array} \right)-r(2g-2-\eta)+(g-\eta-\mu-1).$$
In particular,
$$\dim(I_2(\widehat{C}))=\frac{g^2+(2\mu-7)g+\mu^2-3\mu+2\eta+6}{2}.$$
\item[(ii)] If the non-Gorenstein points of $C$ are unibranch, then there is an embedding $C\hookrightarrow \mathbb{P}^{g+2(\rho-\sigma)-1}$ such that
$$\dim(I_r(C))=\left(\begin{array}{c} r+g+2(\rho-\sigma)-1 \\
                                                    r \\ \end{array} \right)
  +g(1-2r)-2r(\rho-\sigma)+r-1 $$
  where $\rho=h^0(\ob/\ccc)-h^0(\widetilde{\oo}/\widetilde{\ccc})$ and $\sigma=h^0(\ob/\oo)-h^0(\ob/\widetilde{\oo})$.

In particular,
$$\dim(I_2(C))= \displaystyle\frac{(g+2(\rho-\sigma)-1)(g+2(\rho-\sigma)-2)-2g}{2}. $$
\end{itemize}
\end{thm}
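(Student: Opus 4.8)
The plan is to derive Theorem~\ref{ap2quadrics}(ii) from part~(i) by a blowdown procedure, exactly as the text announces (``obtained by blowdown procedures''). The idea is that the blowup $\widehat{C}$ of $C$ along $\ww$ is already embedded in $\mathbb{P}^{g-2+\mu}$ by part~(i), with all the $I_r(\widehat{C})$ computed; since the non-Gorenstein points are assumed unibranch, I can control precisely how the coordinate ring changes when one passes from $\widehat{C}$ back down to $C$, i.e. from $\widehat{\oo}$ to $\oo$ along the chain $\overline{C}\to\widetilde{C}\to C'\to C$. Concretely, I would fix the differential $\zeta\in H^0(\ww)$ from Section~\ref{prelim}, work with $\vl=\ww/\zeta$, and re-embed using $H^0(\vl)$ together with finitely many extra coordinates that ``undo'' the blowup, so that the image is $C$ itself rather than $\widehat{C}\cong C'$.

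First I would pin down the new projective dimension. The ambient space of $C$ is $\mathbb{P}^{N}$ with $N=g+2(\rho-\sigma)-1$; the point is that $h^0$ of the relevant re-embedding sheaf on $C$ exceeds that on $\widehat{C}$ by exactly $2(\rho-\sigma)+1-\mu+\dots$, and one must check this count against the two auxiliary invariants. Here $\rho=h^0(\ob/\ccc)-h^0(\widetilde{\oo}/\widetilde{\ccc})$ measures how much conductor is lost in the partial desingularization and $\sigma=h^0(\ob/\oo)-h^0(\ob/\widetilde{\oo})=h^0(\widetilde{\oo}/\oo)$ measures the drop in $\delta$-invariant; their difference $\rho-\sigma$ is what governs the gap between the Gorenstein (blown-up) picture and the original curve. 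I would establish a clean identity relating $\rho-\sigma$ to $\eta$, $\mu$ and the genus of $\widetilde{C}$, so that substituting $N$ into the binomial-coefficient formula of part~(i) and bookkeeping the Hilbert function gives the stated
$$\dim(I_r(C))=\binom{r+g+2(\rho-\sigma)-1}{r}+g(1-2r)-2r(\rho-\sigma)+r-1.$$
The $r=2$ specialization is then pure algebra: expand $\binom{N+1}{2}=\frac{(N+1)N}{2}$ with $N=g+2(\rho-\sigma)-1$, add $-3g-1$, and collect terms to reach $\frac{(g+2(\rho-\sigma)-1)(g+2(\rho-\sigma)-2)-2g}{2}$.

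The heart of the argument, and the step I expect to be the main obstacle, is the projective normality / Hilbert-function computation for the re-embedded $C$: one must show that the homogeneous coordinate ring of $C$ in $\mathbb{P}^{N}$ has Hilbert function $r\mapsto \dim H^0(C,\fff^r)$ with no correction terms, i.e. that the multiplication maps $\mathrm{Sym}^r H^0(\fff)\to H^0(\fff^r)$ are surjective for the re-embedding sheaf $\fff$. This is where Theorem~1 (Max Noether's statement) and the nearly-Gorenstein machinery of \cite[Thm.~2.6, Thm.~3.7]{Mt} enter: for $\widehat C$ itself this is \cite{KM}-type projective normality of the canonical model, and for $C$ one must propagate surjectivity through the finitely many blowdowns, using that each blowdown affects only the unibranch non-Gorenstein points and that the extra coordinates were chosen inside $H^0$ of the re-embedding sheaf. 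Once surjectivity is known, $\dim I_r(C)=\binom{N+r}{r}-h^0(\fff^r)$ and Riemann--Roch on $C$ (with $\deg\fff$ read off from $\sum_P\dim(\fff_P/\oo_P)$, which is precisely $2g-2$ adjusted by $2(\rho-\sigma)$) finishes the count. I would carry the argument in the order: (1) construct $\fff$ and the embedding $C\hookrightarrow\mathbb{P}^{N}$, computing $N$ and $\deg\fff$; (2) prove projective normality by reducing to part~(i) plus Theorem~1; (3) apply Riemann--Roch to get $h^0(\fff^r)$; (4) subtract from $\binom{N+r}{r}$ and simplify; (5) specialize to $r=2$.
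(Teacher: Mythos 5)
There is a genuine gap: your plan runs the blowdown in the wrong direction and, as a result, never produces the object that makes the computation possible. You propose to start from the embedding of $\widehat{C}$ given by part (i) and then ``undo'' the blowup by adjoining extra coordinates to land on $C$; but you never construct the re-embedding sheaf $\fff$, never compute its degree or $h^0$ (your dimension count for the ambient space literally trails off in ``$2(\rho-\sigma)+1-\mu+\dots$''), and you defer the projective normality of the resulting embedding to an unspecified propagation of Theorem~1 ``through the finitely many blowdowns.'' That surjectivity statement for an ad hoc non-canonical embedding of a non-Gorenstein curve is not something Theorem~1 gives you, and it is precisely the content of the theorem; identifying it as ``the main obstacle'' does not discharge it.

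The paper's actual argument avoids all of this by going \emph{down} rather than up: it constructs an auxiliary curve $C_*$ of which $C$ is itself the blowup, so that part (i), applied verbatim to $C_*$, \emph{is} the statement about $C$. Concretely, at each unibranch non-Gorenstein point $P$ with value semigroup $S$ and gaps $l_1,\dots,l_{\delta_P}$, one takes the point $P_*$ with semigroup
$$S_*=\{0\}\cup\{2\vert\beta\vert-l_i\ \vert\ i=1,\dots,\delta_P\}\cup\{n\in\mathbb{N}\ \vert\ n\geq 2\vert\beta\vert+1\},$$
checks this is closed under addition, and verifies that $\widehat{C_*}=C$ with $g_*=g+2(\rho-\sigma)$, $\eta_*=2(\rho-\sigma)-1$ and $\mu_*=1$. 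Substituting these into the formula of part (i) for $\widehat{C_*}$ gives exactly the displayed formula for $\dim(I_r(C))$, with the ambient dimension $g_*-2+\mu_*=g+2(\rho-\sigma)-1$ falling out automatically, and the projective normality needed is the one already established for blowups in part (i) (via the canonical model being projectively normal). This semigroup construction is the key idea missing from your proposal; without it, steps (1)--(3) of your outline have no concrete content and the Hilbert-function identity you need cannot be verified. The only part of your proposal that matches the paper is the final $r=2$ specialization, which is indeed pure algebra.
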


\begin{proof}
\noindent (i) From \cite[Prop. 4.5]{KM}, we have that $\oo_{\widehat{C}}\ww$ is an invertible sheaf on $\widehat{C}$ spanned by $H^0(\ww)$.
Consider the complete linear system $|\oo_{\widehat{C}}\ww|$, which is base point free since $H^0(\ww) \subset H^0(\oo_{\widehat{C}}\ww)$.
 It defines an embedding of $\widehat{C}$ at the space $\mathbb{P}^n$, where $n=h^0(\oo_{\widehat{C}}\ww)-1$.

If $C$ is a non-Gorenstein curve, then by the proof of \cite[Thm. 2.6]{Mt}, $\widehat{C}$ is projectively normal. Thus
$$\dim(I_r(\widehat{C}))= \left(\begin{array}{c} r+n \\
                                                    r \\ \end{array} \right)
 -h^0(({\oo_{\widehat{C}}\ww})^r).$$
At \cite{KM}, is proved that $h^1(({\oo_{\widehat{C}}\ww})^r)=0$ for all $r\geq1$. So, by the Rosenlicht`s Theorem, we have
$$\begin{array}{ll}
  h^0(({\oo_{\widehat{C}}\ww})^r) & =\deg(({\oo_{\widehat{C}}\ww})^r)+1-\widehat{g}+h^1(({\oo_{\widehat{C}}\ww})^r) \\
             & =r(2g-2-\eta)+1-(g-\eta-\mu).
\end{array}$$
Then, $n=g+\mu-2$ and
$$\dim(I_r(\widehat{C}))= \left(\begin{array}{c} r+g+\mu-2 \\
                                                    r \\ \end{array} \right)
 -r(2g-2-\eta)+(g-\eta-\mu-1).$$
In particular, for $r=2$,
$$\begin{array}{ll}
\dim(I_2(\widehat{C})) & = \left(\begin{array}{c} g+\mu \\
                                                    2 \\ \end{array} \right) +(\eta-3g-\mu+3) \\
                        &=\displaystyle\frac{(g+\mu)(g+\mu-1)}{2}+(\eta-3g-\mu+3) \\
                        & =\displaystyle\frac{g^2+(2\mu-7)g+\mu^2-3\mu+2\eta+6}{2}.
\end{array}$$

\noindent (ii) Let $C$ be a curve with a non-Gorenstein point $P$, unibranch, with semigroup of values $S$,
whose gaps are $\mathbb{N}\setminus S=\{l_1, \dots, l_{\delta_P}\}$. Consider the curve $C_*$ with semigroup
of values at $P_* \in C_*$ given by
$$S_*=\{0\}\cup\{2\,\vert\beta\vert-l_i\,\vert\,i=1, \dots, \delta_P \} \cup \{n\in\mathbb{N}\, \vert\, n \geq 2\,\vert\beta\vert+1 \}.$$
Note that $S_*$ is in fact a semigroup of values since $(2|\beta|-l_i)+(2|\beta|-l_j)=4|\beta|-(l_i+l_j)$.
But $l_i+l_j<2|\beta|$, so $(2|\beta|-l_i)+(2|\beta|-l_j)\geq 2|\beta|+1$ and thus it is in $S_*$.
By construction, $C_*$ is such that $\widehat{C_*}=C$, i.e, $C$ is the blowup of $C_*$ along $\ww_*$.
Furthermore, if $P$ is the unique singular point of $C$, then
$g_* = \overline{g}+ 2|\beta|-\delta_P= g+2(|\beta|-\delta_P)$, \ $\eta_*=2(|\beta|-\delta_P)-1$ and $\mu_*=1$.
Note that this is a local argument and it is also true if $C$ has more than one singular point, provided that all
of them are unibranch and non-Gorenstein. But, considering also Gorenstein points, we have
$g_* = \widetilde{g}+ 2\rho-\sigma= g+2(\rho-\sigma)$, \ $\eta_*=2(\rho-\sigma)-1$ and $\mu_*=1$.
Therefore, by (i), it follows that
$$\begin{array}{ll}
\dim(I_r(C)) & =\left(\begin{array}{c} r+g_*+\mu_*-2 \\
                                                    r \\ \end{array} \right)
  -r(2g_*-2-\eta_*)+(g_*-\eta_*-\mu_*-1) \\
                        & = \left(\begin{array}{c} r+g+2(\rho-\sigma)-1 \\
                                                    r \\ \end{array} \right)
  +g(1-2r)-2r(\rho-\sigma)+r-1.
\end{array}$$
At the particular case where $r=2$, we have
$$\begin{array}{ll}
\dim(I_2(C)) & = \left(\begin{array}{c} g+2(\rho-\sigma)+1 \\
                                                    2 \\ \end{array} \right)
  -3g-4(\rho-\sigma)+1 \\
                        & =\displaystyle\frac{(g+2(\rho-\sigma)-1)(g+2(\rho-\sigma)-2)-2g}{2},
\end{array}$$
and we are done.
\end{proof}

\section{On Gonality, Cliford Index and Green's Conjecture}\label{gongliff}

This section is devoted to the study of gonality, Clifford index and Koszul cohomology, in particular
Green's conjecture on canonical curves, by allowing torsion free sheaves in their definitions.


\begin{prop}\label{ap3gonacliff}
Let $C$ be a non-Gorenstein curve of genus $g$.
\begin{enumerate}
\item[\rm (i)] If ${\gon}(C)<g$ then ${\rm Cliff}(C)\leq{\gon}(C)-2$
\item[\rm (ii)]  ${\rm Cliff}(C)=0$ if and only if ${\gon}(C)=2$
\item[\rm (iii)] If $C$ is trigonal and $g\geq 4$, then ${\rm Cliff}(C)=1$
\end{enumerate}
\end{prop}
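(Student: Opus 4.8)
The plan is to deduce all three items from the definitions of gonality and Clifford index together with the basic Riemann--Roch and Clifford theory available for integral curves (as recalled in the Preliminaries). For item (i), I would start from a torsion free sheaf $\fff$ of rank $1$ computing the gonality, so $\deg\fff=\gon(C)=:d<g$ and $h^0(\fff)=2$. Since $d<g$, one checks via Riemann--Roch that $h^1(\fff)=h^0(\fff)-\deg\fff+g-1=g+1-d\geq 2$, so $\fff$ is an admissible sheaf in the definition of $\cliff(C)$; therefore $\cliff(C)\leq\deg\fff-2(h^0(\fff)-1)=d-2=\gon(C)-2$. The only subtlety is making sure the version of Riemann--Roch used here is the one valid for torsion free rank $1$ sheaves on an integral curve, which is exactly the content cited from \cite{EKS} and \cite{R} in the Preliminaries.

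For item (ii), one direction is immediate from (i): if $\gon(C)=2$ then, since $C$ is non-Gorenstein we have $g\geq 1$, and in fact $g\geq 3$ for non-Gorenstein curves so $\gon(C)<g$ applies, giving $\cliff(C)\leq 0$; combined with $\cliff(C)\geq 0$ (Clifford's theorem for integral curves) this forces $\cliff(C)=0$. For the converse, suppose $\cliff(C)=0$; by the characterization invoked in the proof of Theorem \ref{ap1equiv} (from \cite[App.]{EKS}), $C$ is either hyperelliptic or rational nearly normal, and in either case $C$ carries a torsion free rank $1$ sheaf of degree $2$ with two sections, hence $\gon(C)\leq 2$; since $\gon(C)\geq 2$ always (there is no $g^1_1$ on an integral curve of positive genus), equality holds. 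I expect this converse to be the main obstacle, since it rests on knowing that $\cliff=0$ really does force one of these two very special cases — but that is precisely what the cited appendix of \cite{EKS} provides, so the argument reduces to quoting it correctly and noting that the degree-$2$ pencil in each case is genuine.

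For item (iii), assume $C$ is trigonal with $g\geq 4$, so there is a torsion free rank $1$ sheaf $\fff$ with $\deg\fff=3$, $h^0(\fff)=2$. Then $\cliff(C)\leq 3-2=1$ by the computation in (i) (one must check $\gon(C)=3<g$, which holds since $g\geq 4$). On the other hand $\cliff(C)\neq 0$: otherwise by (ii) we would have $\gon(C)=2$, contradicting trigonality (here one uses that a curve cannot be simultaneously ``$2$-gonal'' and have gonality exactly $3$; more precisely $\gon(C)=2$ would mean the gonality is $2$, not $3$). Hence $\cliff(C)=1$. The forward-looking caution here is only to phrase ``trigonal'' as ``$\gon(C)=3$'' consistently with the sheaf-theoretic definition of gonality adopted in the paper, so that the exclusion of the $\cliff=0$ case via (ii) is airtight; no further computation is needed.
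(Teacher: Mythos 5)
Your argument for (i) and (iii) is essentially identical to the paper's: take a sheaf computing the gonality, use Riemann--Roch to check $h^1\geq 2$ so that it contributes to $\cliff(C)$, and then combine (i), (ii) and the nonnegativity of the Clifford index to get (iii). For (ii) you diverge slightly: the paper proves both directions at once by quoting that $\cliff(C)=0$ iff $C$ is hyperelliptic or rational nearly normal (\cite[App.]{EKS}) and that these are exactly the curves of gonality $2$ (\cite[Thm.~3.4]{KM}, \cite[Thm.~2.1]{Mt2}), whereas you derive the implication $\gon(C)=2\Rightarrow\cliff(C)=0$ from item (i) together with $\cliff(C)\geq 0$, and only quote the literature for the converse. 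That is a legitimate alternative, but it forces you to justify $\gon(C)=2<g$, and your justification --- ``$g\geq 3$ for non-Gorenstein curves'' --- is false: the projective closure of ${\rm Spec}\,k[t^3,t^4,t^5]$ is a non-Gorenstein (indeed rational nearly normal) curve of arithmetic genus $2$. In that degenerate case $\gon(C)=2=g$, item (i) does not apply, and in fact no sheaf satisfies $h^0\geq 2$ and $h^1\geq 2$, so the Clifford index is a minimum over the empty set; the paper sidesteps this by citing the characterizations directly rather than routing through (i). So either restrict (ii) to $g\geq 3$ when using your argument, or follow the paper and quote the two characterizations for both implications. Everything else --- including the observation that the converse of (ii) needs a genuine degree-$2$ pencil on hyperelliptic and on rational nearly normal curves, and the bookkeeping in (iii) excluding $\cliff(C)=0$ via (ii) --- matches the paper's proof.
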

\begin{proof}
Let $\fff$ be a sheaf on $C$ which computes its gonality. Then we have that
$\deg(\fff)=\gon(C)$ and $h^0(\fff)\geq 2$. By Riemann-Roch,
$$
h^1(\fff)=h^0(\fff)+(g-\gon(C))-1
$$
so $h^1(\fff)\geq 2$ if $\gon(C)<g$. Therefore $\fff$ contributes to the Clifford index, and hence
$$\begin{array}{ll}
\cliff(C) &\leq \deg(\fff)-2h^0(\fff)+2 \\
             &=\gon(C)-2h^0(\fff)+2 \\
             &\leq\gon(C)-2
\end{array}$$
and the item $\rm (i)$ follows.

To prove (ii), as we have already noted, $\cliff(C)=0$ if and only if $C$ is hyperelliptic or rational nearly normal.
But according to \cite[Thm. 3.4]{KM} or \cite[Thm. 2.1]{M}, these are precisely the curves with gonality $2$.
Item (iii) follows directly from (i) and (ii).
 \end{proof}

Now we prove our last result.

\


\noindent {\it Proof of Theorem 3\ \ }
To prove (i). Let $\overline{\fff}:=\oo_{\cb}\langle 1,x\rangle$ be a sheaf which computes gonality in $\cb$.
We may suppose it is supported outside any regular point over a singular point of $C$.  Set also
$$\fff:=\oo_C\langle 1,x\rangle.$$
For any singular point $P\in C$, write
$$\op=k\oplus ky_1 \oplus \cdots \oplus ky_{n} \oplus \cp$$
with $y_i\in\mmp$ for all $i=1, \cdots, n$. Hence
$$\op+x\op=kx+kxy_1+\cdots+kxy_{n}+\op$$
and so
$$\deg_P\fff=\dim((\op+x\op)/\op)\leq \dim(\op/\cp).$$
Thus
$$\begin{array}{ll}
{\gon}(C) &\leq \deg\fff \\
                     & = \deg\overline{\fff}+\sum_{P\in C_{\rm sing}} \deg \fff_P \\
                     & = {\gon}(\cb)+h^0(\oo/\ccc) \\
                     & \leq [ (\overline{g}+3)/2]+g-\overline{g}-\eta \\
                     & = g+1-[\overline{g}/2]-\eta
\end{array}$$
so the result follows and we have equality only when $\eta=1$ and $\overline{g}=0$ or $1$.

To prove (ii), let $m:=\dim(\obp/\mmp\obp)$ be the multiplicity of $P$, where $3\leq m \leq g$. As $C$ is rational, write $k(C)=k(x)$
where $x$ is now the identity function at finite distance of ${\mathbb{P}^{1}}=k\cup\{\infty\}$. Assume the singular point
$P$ lies under $0$.
Then $x^mu\in\op$ for some unit $u$ in $\obp$ since $m$ is the multiplicity of $P$. Now we know that $u$
admits a $m$-th root $u'$ in the completion of $\obp$ by the same argument of a Puiseux parametrization.
But since $u$ is rational, so is $u'$. Replacing $x$ by $xu'$ as the identity function at finite distance, we
may assume $x^m\in\op$. Then $\oo_C\langle 1, x^m \rangle$ has degree $m$ at the point under infinity
and zero at other points of $C$. So $\rm gon(C) \leq m$.
On the other hand, let $n$ be the number of elements in $\mathbb{N}$ between $m$ and $\beta$ outside the
semigroup of values $S$, i.e, the number of gaps of $S$ between $m$ and $\beta$. So, the multiplicity
of $P$ is $m=g+1-n$ and, furthermore, the sheaf ${\oo _{C}\langle 1,x\rangle}$ has degree 1  at the point under infinity, at
most $n+1$ at $P$ and zero at other points of $C$. Therefore,
$$\gon(C) \leq n+2.$$
As $m$ and $n+2$ are inversely proportional, the gonality of $C$ increases as $m$ approaches $n+2$.
The maximum occurs when
$$g+1-n=n+2 \Longleftrightarrow n=\frac{g-1}{2}$$
and $m=g+1-n=\frac{g+3}{2}$. As the gonality is an integer number, it follows that
$$\gon(C) \leq \big\lfloor \frac{g+3}{2} \big\rfloor$$
and we are done.

Let us prove (iii). The reciprocal follows directly from the definition of Clifford index and  Proposition \ref{ap3gonacliff}.
Now, let $\fff$ be a sheaf which computes the Clifford index, then
\begin{equation}
\label{equcl1}
\deg(\fff)=1+2h^0(\fff)-2
\end{equation}
On the other hand, by Clifford's Theorem \cite[App.]{EKS}, since $\cliff(C)\neq 0$, we have
\begin{equation}
\label{equcl2}
h^0(\fff)+h^1(\fff)\leq 5
\end{equation}
If $h^0(\fff)=2$, then $C$ is trigonal by (\ref{equcl1}). If not, $h^0(\fff)=3$ by (\ref{equcl2}), and hence
$\deg(\fff)=5$ by (\ref{equcl1}). In order to see that the latter condition is needed, consider the projectively closure of
$$
{\rm Spec}\, k[t(t-1)^5,t^2(t-1)^3,t^2(t-1)^6,t^2(t-1)^7]
$$
It has genus $5$ and is not trigonal by \cite{AM}, and the sheaf $$\fff:=\oo_C\langle 1,t(t-1)^3,t^2(t-1)^3\rangle$$
has degree $0$ elsewhere but the infinity where it has degree $5$. So $\deg(\fff)=5$ and, by construction, $h^0(\fff)=3$.

To prove (iv). Let $C'$ be the canonical model of $C$, since $C$ is nearly Gorenstein we verify
$$\begin{array}{ll}
\deg(C')&= 2g-2-\eta \\
             &=2(g'+\mu+\eta)-2-\eta \\
            &=2(g'+1+\eta)-2-\eta \\
            &=2g'+\eta
\end{array}$$
Moreover, if $C'$ is nearly Gorenstein, it is defined by a complete linear system owing to \cite[Lem. 5.8]{KM}. Then,
by \cite[Thm. 8.8.1]{E},
\begin{equation}
\label{equkp2}
K_{p,2}(C',\oo_{C'}\ww)=0
\end{equation}
if $p<\eta$ and $C'$ is smooth. But if $C$ is nearly Gorenstein then $C'$ is projectively normal. So one is able to adjust
the proof of \cite[Thm. 8.8.1]{E} to remove the hypothesis that $C'$ should be smooth.

Now, since $C$ is nearly Gorenstein, then $H^0(\oo_{C'}\ww)=H^0(\ww)$ because $C'$ is given by a complete linear system.
 Moreover, $\pi'_{*}((\oo_{C'}\ww)^q)=\ww^q$ for any $q\geq 2$ according to the proof of \cite[Thm. 2.6]{Mt}.
 In particular, $H^0((\oo_{C'}\ww)^q)=H^0(\ww^q)$ for any $q\geq 1$ which implies $K_{p,q}(C',\oo_{C'}\ww)=K_{p,q}(C,\ww)$
 for every $p,q$, and the result follows due to (\ref{equkp2}).

In order to build the family, for every $p\geq 1$, consider the curve $C_p$ which is the projective closure of
$$
{\rm Spec\,} k[t^{p+3}, t^{p+5},t^{p+6}, \dots, t^{2p+7}].
$$
Note that $C_P$ is trigonal since $\oo_{C_P}\langle 1,t\rangle$ has degree $1$ at the infinity and $2$ at the
unique singular point of $C_p$. Besides $C$ is nearly Gorenstein with $\eta= p+1$. Since $C_p$ is trigonal
with genus greater than $4$ we have $\cliff(C_p)=1$, and since $C_p$ is nearly Gorenstein with $p<\eta$ we have
 $K_{p,2}(C_p,\ww)=0$. \qed

\

There are many trigonal non-Gorenstein curves of genus $3$ since not all of them are nearly normal,
which can be easily seen from its very definition. On the other hand, it was proved in \cite{Mt2} that any
 non-Gorenstein curve of genus $4$ is at most trigonal so the bound of (i) is not attained. For genus
 $5$, in \cite{AM} one shows that the curve given by the projective closure of
$$
{\rm Spec}\, k [ t\,u^2+t\,u^3, t\,u^4+t^2\,u^5, t^2\,u^2+t^3\,u^7, t^3\,u^2, t^4\,u^2, t\,u^9, t^2\,u^9 ]
$$
where $u:=t-1$ has genus and gonality $5$. The following example shows that the bound of (ii) is sharp in low genus.


\begin{exam} \rm At the proof of \cite[Thm. 2]{AM}, is shown that the curve with genus 5 given by the projective
closure of ${\rm Spec} \ k[x^4, x^5+x^7,x^{10},x^{11}]$ has gonality $4=\big\lfloor \frac{5+3}{2} \big\rfloor$.
At the same proof, one can observe that, for genus 5, the rational curves whose unique singular point is
monomial have gonality at least 3, i.e, the upper limit can only be reached if the singular point is non-monomial.

Now, let $C$ be a rational curve with genus 6, semigroup of values $\rm S^*=\{0,4,7,8,10\}$ and given
by the projective closure of
$${\rm Spec}\, k [x^4,x^7,x^{10},x^{12},x^{13}].$$
We will show that $C$ has gonality $4=\big\lfloor \frac{6+3}{2} \big\rfloor$. Let $x$ be the identity function at
finite distance of $\mathbb{P}^{1}$ and assume the only singular point $P$ of $C$ lies under $0$. Any sheaf that computes
gonality is generated and always can be taken containing the structural sheaf. So, we need to prove that any
sheaf of the form $\mathscr{G}=\oo _{C}\langle 1,f\rangle$ where $f\in k(x)$ at $C$ has degree at least $4$. For this,
write $f=x^rh$, where $h$ is an unit at $\obp$. If $h$ has no poles at $\infty$, then $h$ does not affect the
degree of $\mathscr{G}$, since their poles at finite points of $\mathbb{P}^{1}$ compensate the losses at infinity; and if $h$ has a
pole at infinity, it only can add degree to $\mathscr{G}$. Thus, we can assume $\mathscr{G}=\oo _{C}\langle 1,x^r\rangle$.
Note that, for $r=1$, $\fff={\oo _{C}\langle 1,x\rangle}$ has degree 1 at infinity, zero at the other points but $P$ and 3 at $P$, since
$$0<1<4<5<7<8<9<10$$
is a saturated sequence of elements of $A:=v_P(\fff_P)=v_P(\op+x\op)$ linking the minimum element of $A$
to the conductor $\beta$ of $S$, with $|A\setminus \rm S |=3$, according \cite[Prp. 2.11(iii)]{BDF}, we have
$3=\dim(\fff _{P}/\oo _{P})=\deg _{P}(\fff)$. Therefore, $\deg(\fff)=4$ and $\gon(C)\leq 4$.
On the other hand, if $r \geq 4$, then $\deg_{\infty}(\mathscr{G})\geq 4$; $r=3$ implies that $\deg_P(\mathscr{G})=1$ and
$\deg_{\infty}(\mathscr{G})=3$; if $r=2$, then $\deg_P(\mathscr{G})=3$ and $\deg_{\infty}(\mathscr{G})=2$; finally, if $r\leq -1$, so
$\deg_P(\mathscr{G}) \geq 4$. Thus, $\gon(C)=4$.
\end{exam}








\begin{thebibliography}{}

\bibitem{A} L. F. F. Abrantes, {\it Gonalidade e o teorema de Max Noether para curvas n\~ao-Gorenstein}, {Ph. D. Thesis},
{www.mat.ufmg.br/intranet-atual/pgmat/TesesDissertacoes/uploaded/Tese47.pdf}

\bibitem{AM} L. F. F. Abrantes and R. V. Martins, {\it Gonality of non-Gorenstein curves of genus five},  to appear in Bull. Braz. Math. Soc.

\bibitem{ApF} M. Aprodu and G. Farkas, {\it Koszul cohomology and applications to moduli}, Clay Math. Proc., 14, 25--50 (2011).

\bibitem{ACGH} E. Arbarello, M. Cornalba,  P. A. Griffiths, and J. Harris, \emph{Geometry of algebraic curves}, Springer-Verlag (1985).

\bibitem{AS} E. Arbarello and E. Sernesi,
{\it Petri's approach to the study of the ideal associated to a special divisor}, Invent. Math. 49,  99--119 (1978).

\bibitem{B} D. W. Babbage,
{\it A note on the quadrics through a canonical curve}, J. Lodon Math. Soc. 14, 310--315 (1939).

\bibitem{Bal} E. Ballico, {\it Bril--Noether theory for rank 1 torsion free sheaves on singular projective cuves}, J. Korean Math. Soc., 37, 359--369 (2000).

\bibitem{BDF} V. Barucci, M. D'Anna and R. Fr\"oberg, {\it Analytically unramified one-dimensional semilocal rings and their value semigroups}, Journal of Pure and Applied Algebra, 147, 215-254 (2000).

\bibitem{BF} V. Barucci and R. Fr\"oberg, R., {\it One-dimensional almost Gorenstein rings}, Journal of Algebra ,88, 418--442 (1997).

\bibitem{C} F. Catanese, {\it Pluricanonical-Gorenstein-curves,} in ``Enumerative geometry and classical algebraic geometry (Nice, 1981),'' Progr. Math., 24, Birkh\"auser Boston, 51--95 (1982).

\bibitem{CS} A. Contiero and K.-O. Stoehr. {\it Upper bounds for the dimension of moduli spaces of curves with symmetric Weierstrass semigroups}, J. Lond. Math. Soc., 88, 580--598 (2013).

\bibitem{DM} P. Deligne and D. Mumford, {\it The irreducibility of the space of curves of a given genus}, Publ. Math., Inst. Hautes \'Etud. Sci. 36, 75-109 (1969)

\bibitem{Cp} M. Coppens, \emph{Free linear systems on integral Gorenstein curves}, J. Algebra ,145, 209-218 (1992).

\bibitem{E} D. Eisenbud, {\it The geometry of syzygies}, Springer Verlage (2005)

\bibitem{EKS} D. Eisenbud, J. Harris, J. Koh, M. Stillman, {\it Determinantal equations for curves of high degree},
Amer. J. Math., 110, 513--539 (1988).

\bibitem{En} Enriques, F., {\it Sulle curve canoniche di genera $p$ cello spazio a $p-1$ dimensioni},
Rend. Accad. Sci. Ist. Bologna, 23, 80--82 (1919).

\bibitem{FT} M. Franciosi and E. Tenni, {\it Green's conjecture for binary curves}, http://arxiv.org/pdf/1402.5780v1.pdf.

\bibitem{F} T. Fujita, {\it  On hyperelliptic polarized varieties,} Tohoku Math. J. , 35, 1--44 (1983).

\bibitem{Gr} Green, M., {\it Koszul cohomology and the geometry of projective varieties}, J. Differential. Geometry, 19, 125-171 (1984).

\bibitem{H} R. Hartshorne, {\it Generalized divisors on Gorenstein curves and a theorem of Noether,} J. Math. Kyoto Univ.,26-3, 375--386 (1986).

\bibitem{KM} S. L. Kleiman and R. V. Martins, {\it The canonical model of a singular curve}, Geometria Dedicata, 139, 139-166 (2009).

\bibitem{Mt} R. V. Martins, \emph{A generalization of Max Noether's theorem}, Proceeding of the American Mathematical Society, 140, 377--391 (2012).

\bibitem{Mt2} R. V. Martins, {\it Trigonal non-Gorenstein curves}, Journal of Pure and Applied Algebra, 209, 873--882 (2007).

\bibitem{MSD} D. Mumford and B. Saint-Donat, {\it Toroidal embeedings I.}, Lecture Notes in Mathematics. 339. Berlin-Heidelberg-New York: Springer.-Verlag. VIII, 209 p. (1973).

\bibitem{M} D. Mumford, {\it Curves and their jacobians},
The University of Michigan Press, Ann Arbor, (1975).

\bibitem{N} M. Noether,
{\it \"Uber die invariante Darstellung algebraicher Funktionen},
Math. Ann., 17, 263--284 (1880).

\bibitem{P} K. Petri,
{\it \"Uber die invariante {D}arstellung algebraischer Funktionen eiener  Ver\"anderlichen},
Math. Ann., 88, 242--289  (1922).

\bibitem{RSt} R. Rosa and K-O. Stoehr, {\it Trigonal Gorenstein curves}, J. Pure Appl. Algebra, 174, 187--205 (2002).

\bibitem{R} M. Rosenlicht, {\it Equivalence relations on algebraic curves}, Annals of Mathematics, 56, 169--191 (1952).

\bibitem{S} B. Saint-Donat, B., {\it On Petri's analysis of the linear system of quadrics through a canonical curve}, Math. Ann., 206, 157--175 (1973).

\bibitem{Sak} F. Saki, {\it Canonical models of complements of stable curves}, Proc. int. Symp. on algebraic geometry, Kyoto, 643-661 (1977).

\bibitem{Sh} V. V. Shokourov, {\it The Noether-Enriques theorem on canonical curves}, Mat. Sbornik, 86, 367-408 (1972)

\bibitem{SV} K.-O. Stoehr and P. Viana, {\it A variant of Petri's analysis of the canonical ideal of an algebraic curve}, Manuscripta Math. 61, 223--248 (1988).

\bibitem{St} K.-O. Stoehr, {\it On the poles of regular differentials of singular curves}, Bull. Brazilian Math. Soc., 24, 105--135 (1993) .

\bibitem{St1} K.-O.\ Stoehr,
{\it On the moduli spaces of {G}orenstein curves with symmetric Weierstrass semigroups}, J.\ Reine Angew.\ Math., 441, 189--213 (1993).

\bibitem{Vo1} C. Voisin, {\it Green's generic syzygy conjecture for curves of even genus lying on a K3 surface}, J. Eur. Math. Soc. (JEMS) 4, 363--404 (2002).

\bibitem{Vo2} C. Voisin, {\it Green's canonical syzygy conjecture for generic curves of odd genus}, Compos. Math., 141, 1163--1190 (2005).

\end{thebibliography}
\end{document}